\newcommand{\pres}[3]{\textnormal{#1} \langle #2 \mid #3 \rangle}
\newcommand{\xra}[1]{\xrightarrow{}^\ast_{#1}}
\newcommand{\sR}{\mathscr{R}}
\newcommand{\sS}{\mathscr{S}}
\DeclareFontFamily{U}{wncy}{}
    \DeclareFontShape{U}{wncy}{m}{n}{<->wncyr10}{}
    \DeclareSymbolFont{mcy}{U}{wncy}{m}{n}
    \DeclareMathSymbol{\sh}{\mathord}{mcy}{"78} 
\newcommand{\comment}[1]{\textcolor{red}{#1}}
\newtheorem{theorem}{Theorem} % 1st argument is your name for it
\newtheorem{lemma}{Lemma}     % 2nd argument is what is printed
\theoremstyle{definition}
\newtheorem{remark}{Remark}
\begin{document}

\title[A special monoid with undecidable Diophantine problem]{A word-hyperbolic special monoid with undecidable Diophantine problem}

\author{Carl-Fredrik Nyberg-Brodda}
\address{Alan Turing Building, Department of Mathematics, University of Manchester, UK.}
\email{carl-fredrik.nybergbrodda@manchester.ac.uk}

\thanks{The author gratefully acknowledges funding from the Dame Kathleen Ollerenshaw Trust, which is funding his current position as Research Associate at the University of Manchester.}

%    General info
\subjclass[2020]{20M05 (primary) 20F70, 20F05, 20F10 (secondary)}

\date{\today}

%\dedicatory{}

\keywords{}

\begin{abstract}
The Diophantine problem for a monoid $M$ is the decision problem to decide whether any given system of equations has a solution in $M$. In this note, we give a simple example of a context-free, word-hyperbolic, finitely presented, special monoid $M$ with trivial group of units, and such that the Diophantine problem is undecidable in $M$. This answers two questions asked by Garreta \& Gray in 2019, and shows that the decidability of the Diophantine problem in hyperbolic groups, as proved by Dahmani \& Guirardel, does not generalise to word-hyperbolic monoids. 
\end{abstract}

\vspace*{-0.2cm}
\maketitle

\noindent A monoid is said to be \textit{special} if all its defining relations are of the form $w=1$. The study of special monoids form a classical part of combinatorial semigroup theory, seeing intense study by Adian \cite{Adian1966} and his student Makanin \cite{Makanin1966}. The latter would subsequently study the \textit{Diophantine problem} in free monoids and groups. For a given monoid $M$, the Diophantine problem asks to decide whether a given system of equations has a solution in $M$. Makanin famously solved this problem  for free monoids \cite{Makanin1977} and free groups \cite{Makanin1982}. The Diophantine problem has seen a good deal of study since, particularly for groups; for example, Dahmani \& Guirardel \cite{Dahmani2010} recently proved the decidability of the problem for all hyperbolic groups. In this note, we shall prove that their result does not extend to word-hyperbolic monoids (in the sense of Duncan \& Gilman \cite{Duncan2004}). We shall prove that there exists a finitely presented special monoid $M$ with context-free word problem, and with trivial group of units, such that the Diophantine problem for $M$ is undecidable. This answers two questions asked by Garreta \& Gray in 2019 regarding the properties of the Diophantine problem in special monoids resp. word-hyperbolic monoids.  

We follow the notation of \cite{NybergBrodda2022c}, all of which is very standard. The free monoid on a finite set $A$ is denoted by $A^\ast$, and the free semigroup is denoted by $A^+$. We assume the reader is familiar with the basic elements of formal language theory (e.g. via \cite{Hopcroft1979}) and rewriting systems (e.g. via \cite{Book1993}). For a rewriting system $\sR \subseteq A^\ast \times A^\ast$, we let $M_\sR$ denote the monoid presented by the monoid presentation $\pres{Mon}{A}{\sR}$. If this presentation is special, then we also say that $\sR$ is special. Additionally, a special monoid $M$ generated by a finite set $A$ has \textit{context-free word problem} if the set of words over $A$ representing the identity element is a context-free language. This is, as proved (non-trivially) in \cite{NybergBrodda2022c}, equivalent to $M$ having context-free word problem in the sense of Duncan \& Gilman \cite{Duncan2004}. Finally, $M$ is \textit{word-hyperbolic} if there exists a regular language $R \subseteq A^\ast$ which generates $M$ and such that the ``multiplication table'' for $M$ with respect to $R$ is a context-free language (see \cite{Duncan2004} or e.g. \cite{NybergBrodda2022d}) for details). 

Our starting point is the following theorem:

\begin{theorem}[{Otto, \cite[Theorem~1]{Otto1995}}]\label{Thm:OttoTheorem}
There exists a finite special rewriting system $\sS$ such that $\sS$ has no non-trivial critical pairs, and the Diophantine problem is undecidable for $M_\sS$. 
\end{theorem}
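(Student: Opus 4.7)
The plan is to reduce a known undecidable problem to the Diophantine problem of a carefully tailored finite special monoid $M_\sS$, while simultaneously enforcing that the defining relators $\sS$ have no non-trivial overlaps. The word problem in a finitely presented monoid is a trivial special case of the Diophantine problem (namely, equations without variables), so it suffices to construct a finite special rewriting system $\sS$ without critical pairs such that $M_\sS$ has undecidable word problem, or more ambitiously to directly encode an undecidable equation problem.

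A concrete entry point is to take a finitely presented monoid $N$ with undecidable word problem, for instance a semi-Thue system simulating a universal Turing machine with rules $\ell_i \to r_i$. One then realises $N$ (or at least reduces its word problem) inside a special monoid by introducing, for each generator $a$ of $N$, a companion letter $\bar a$ together with the special relations $a \bar a = 1$ and $\bar a a = 1$, and replacing each relator $\ell_i = r_i$ by the single special relator $\ell_i \bar{r}_i = 1$, where $\bar r_i$ is the reversed mirror of $r_i$ in barred letters. Equations over $N$ then pull back to equations over the resulting overmonoid, yielding an undecidable Diophantine problem there.

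The delicate step is eliminating critical pairs. The relators $a \bar a = 1$ and $\bar a a = 1$ will generally overlap other rules, and the $\ell_i \bar{r}_i$ may overlap each other. To remedy this I would redesign the encoding to avoid generator/companion pairs altogether, for instance by simulating each symbol of $N$ by a long unique block of letters drawn from a large alphabet, chosen so that each left-hand side $w_i$ begins and ends with a marker appearing nowhere else, and so that no $w_i$ is a factor of any other. This eliminates all non-trivial overlaps. The encoding must still allow the Turing computation to be recovered, so one has to verify that the ``padded'' equations faithfully reflect the original undecidable ones.

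The main obstacle is the tension between these two demands. Every mechanism for killing critical pairs—padding with markers, distributing generators over separate alphabets, or partitioning rules into disjoint blocks—tends to decouple the encoded computation from the monoid structure, threatening to make the Diophantine problem decidable after all. A successful construction must thread a narrow path on which (a) the system is faithful enough to the underlying Turing simulation that its Diophantine problem remains undecidable, yet (b) the defining words are combinatorially rigid enough to be pairwise non-overlapping. Producing such an explicit $\sS$ is the technical heart of the theorem, and should be expected to require a careful and quite explicit combinatorial construction.
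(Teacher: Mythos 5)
This statement is quoted from Otto \cite{Otto1995} and the paper does not reprove it, so your attempt must be measured against Otto's actual construction, which (as the closing remark of the paper indicates) encodes a fixed undecidable instance of the Post correspondence problem into a single word equation over a tailored special monoid. Your proposal has a genuine gap at its very first step: you claim it ``suffices to construct a finite special rewriting system $\sS$ without critical pairs such that $M_\sS$ has undecidable word problem.'' No such system exists. A special system replaces non-empty words by the empty word, hence is length-reducing and terminating; the absence of non-trivial critical pairs makes it locally confluent, hence confluent by Newman's Lemma --- this is exactly the observation exploited in Lemma~\ref{Lem:main_lemma} of this paper. A finite complete rewriting system has decidable (indeed, for special systems, context-free) word problem by reduction to normal forms. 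So the undecidability asserted in Theorem~\ref{Thm:OttoTheorem} cannot come from variable-free equations: it must live in genuine equations with unknowns, and any proof has to engage with that. Your fallback clause (``or more ambitiously to directly encode an undecidable equation problem'') is the only viable route, but you supply no construction for it.

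Your second paragraph's encoding is also broken beyond the overlap problem you acknowledge. Adjoining $a\bar{a}=1$ and $\bar{a}a=1$ for every generator makes every generator a unit, so the resulting special monoid is a group; this is precisely the situation the present paper needs to exclude (it proves $U(M_\sS)=1$), and in any case those two relators overlap each other, so no padding of the relators $\ell_i\bar{r}_i$ alone can restore the critical-pair condition. Your final paragraph correctly identifies the tension between overlap-freeness and faithfulness of the simulation, but resolving that tension is the entire content of the theorem: ``simulate each symbol by a long unique block with fresh end-markers'' comes with no argument that the Diophantine problem of the padded monoid remains undecidable, and you yourself flag that such padding threatens to make the problem decidable. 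What Otto actually does is fix an undecidable PCP instance $\{(x_i,y_i)\}$ and build a special presentation, with fresh separator letters arranged so that no two left-hand sides overlap, in which a specific equation with unknowns has a solution in $M_\sS$ if and only if the PCP instance has a solution; both directions of that equivalence must be verified. Without the target equation and at least the ``only if'' verification, what you have is a research plan identifying the right obstacles, not a proof.
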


We will show that any $M_\sS$ as in the statement of Theorem~\ref{Thm:OttoTheorem} has trivial group of units. Recall that the group of units of a special monoid $M$ is the subgroup consisting of all two-sided invertible elements of $M$, and is denoted by $U(M)$. The triviality of $U(M_\sS)$, when combined with a result by the author, will then yield Theorem~\ref{Thm:MainThm}. 

We first recall some elements of special monoid theory from Zhang \cite{Zhang1992} and \cite{NybergBrodda2022c}. Let $M = \pres{Mon}{A}{u_i = 1 \: (1 \leq i \leq k)}$ be a finitely presented special monoid, with all $u_i$ assumed non-empty. A word $w \in A^\ast$ representing a two-sided unit $m \in U(M)$ will be called \textit{invertible}, and analogously for \textit{left/right} invertible words. An invertible word $w \in A^+$ is called \textit{minimal} if none of its proper non-empty prefixes is invertible. The set of minimal words clearly forms a biprefix code generating the invertible words. Hence, every defining word $u_i$ can be uniquely factored over $A^\ast$ into minimal words as $u_i \equiv \lambda_{i,1} \lambda_{i,2} \cdots \lambda_{i,n}$. The collection $\bigcup_{i, j} \{ \lambda_{i,j} \}$ of all such factors arising in this way for $1 \leq i \leq k$ is denoted by $\Lambda$. Clearly, $\Lambda$ is a finite set of minimal words. The set $\Delta$ is defined as the set of minimal words $\delta \in A^+$ such that there exists some $\lambda \in \Lambda$ with $|\delta|\leq |\lambda|$ and $\delta = \lambda$ in $M$. Then $\Delta$ is a finite set, too, and $\Delta$ generates (as a submonoid) the subgroup $U(M)$ by \cite[Lemma~3.4]{Zhang1992}. Hence $\Lambda$ also generates $U(M)$. 

We now give a direct proof that $U(M_\sS)$ is trivial, with $M_\sS$ any monoid as in Theorem~\ref{Thm:OttoTheorem}. In fact, we give a more general result. 

\begin{lemma}\label{Lem:main_lemma}
Let $\sR$ be a finite special rewriting system. If $\sR$ has no non-trivial critical pairs, then $U(M_\sR) = 1$. 
\end{lemma}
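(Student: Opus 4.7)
The plan is to combine the structural machinery reviewed above with a careful analysis of reductions. Since $\Lambda$ generates $U(M_\sR)$ and every defining word $u_i$ represents $1$ in $M_\sR$, it is enough to show each $u_i$ is itself minimal invertible: under that, the factorization $u_i \equiv \lambda_{i,1}\cdots\lambda_{i,n_i}$ collapses to the trivial one, $\Lambda = \{u_1,\ldots,u_k\}$, every generator of $U(M_\sR)$ represents $1$, and therefore $U(M_\sR)=1$.

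For a special system, having no non-trivial critical pairs unpacks to the statement that no left-hand side of $\sR$ is a factor of another, and no non-empty proper suffix of any left-hand side is a non-empty proper prefix of any left-hand side (including the self-overlap case, so no $u_i$ has a non-trivial period). Since each rule strictly shortens a word, $\sR$ is terminating; together with local confluence from the hypothesis, Newman's lemma makes $\sR$ a complete rewriting system, and every element of $M_\sR$ has a unique irreducible normal form. A direct consequence is that every proper prefix of any $u_i$ is $\sR$-irreducible.

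The main step is to show that no proper non-empty prefix of $u_i$ is invertible. Suppose for contradiction $u_i \equiv pq$ with $p,q \in A^+$ and $p$ a unit of $M_\sR$. Then there is an $\sR$-irreducible word $r \in A^\ast$ with $rp \to^\ast_\sR \epsilon$, and $p$ is also irreducible by the previous paragraph. Since neither $r$ nor $p$ admits any internal reduction, every step of $rp \to^\ast_\sR \epsilon$ must apply a rule that straddles the current boundary between the $r$- and $p$-parts, and by no-overlap the straddling rule is uniquely determined at each step. Iterating, the reduction eats a non-empty proper suffix of the current $r$-part and a non-empty proper prefix of the current $p$-part at each step, yielding a decomposition $p \equiv \beta_1 \beta_2 \cdots \beta_m$ in which each $\beta_j$ is a non-empty proper suffix of some rule LHS $u_{i_j}$. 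In particular $\beta_1$ is a non-empty proper suffix of $u_{i_1}$ and also a non-empty prefix of $p$, hence a non-empty proper prefix of $u_i$ (since $|\beta_1| \leq |p| < |u_i|$). This is exactly a forbidden overlap of $u_{i_1}$ with $u_i$: a cross-overlap if $u_{i_1} \neq u_i$, a self-overlap if $u_{i_1} = u_i$, contradicting the hypothesis.

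Thus no proper non-empty prefix of any $u_i$ is invertible, each $u_i$ is minimal invertible, and $U(M_\sR)=1$ by the reduction in the first paragraph. The main obstacle is the boundary-reduction analysis in the third paragraph, which relies critically on the no-overlap condition to force uniqueness of the straddling rule at each step and rigidity of the whole reduction sequence; the rest follows directly from the $\Lambda$-machinery recalled from Zhang's work.
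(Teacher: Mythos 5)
Your proof is correct. It shares the paper's overall skeleton --- reduce to showing each $u_i$ is a minimal invertible word via Zhang's fact that $\Lambda$ generates $U(M_\sR)$, obtain completeness from Newman's Lemma, observe that proper factors of relators are irreducible, and derive a forbidden overlap from a redex straddling the boundary between two irreducible words --- but the heart of the argument runs along a genuinely different line. The paper supposes $u_i \equiv \lambda_1\cdots\lambda_n$ with $n>1$, reduces the cyclic conjugate $\lambda_n\lambda_1\cdots\lambda_{n-1}$ to the empty word, and then needs two further facts about invertible words (every prefix of a relator is right invertible; a minimal word has no non-trivial right invertible suffix) to force the straddling rule to contain all of $\lambda_n$, locating the overlap on the $\lambda_n$-side. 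You instead take an arbitrary invertible proper prefix $p$ of $u_i$ and an irreducible left inverse $r$, and read the overlap off the $p$-side of the \emph{first} straddling redex: its portion $\beta_1$ inside $p$ is at once a non-empty proper suffix of the applied left-hand side and a non-empty proper prefix of $u_i$. This is more elementary --- no invertibility theory beyond the definition of a unit is needed for that step --- and it proves the slightly sharper statement that no proper non-empty prefix of a relator is invertible, without first passing through the minimal factorisation. Two minor points: the full decomposition $p\equiv\beta_1\cdots\beta_m$ and the uniqueness of the straddling rule are correct but superfluous, since $\beta_1$ alone already yields the contradiction; and your unpacking of ``no non-trivial critical pairs'' as ``no overlap or inclusion ambiguities among the left-hand sides'' is exactly the reading the paper's own proof relies on, and the one the lemma needs.
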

\begin{proof}
Suppose that $\sR \subseteq A^\ast \times A^\ast$, and that the rules of $\sR$ are $(u_i, 1)$ with $u_i \in A^+$ for $1 \leq i \leq k$. As $\sR$ has no non-trivial critical pairs, it is locally confluent, and hence confluent by Newman's Lemma \cite{Newman1942}. Furthermore, as $\sR$ has no non-trivial critical pairs, every proper subword of $u_i$ is irreducible mod $\sR$ for $1 \leq i \leq k$. 

Fix some $1 \leq i \leq k$. Factorise $u_i \equiv \lambda_1 \lambda_2 \cdots \lambda_n$ uniquely into minimal factors $\lambda_j \in \Lambda$. Assume, for sake of contradiction, that $n > 1$. As $\lambda_n$ is invertible, and $\lambda_1 \lambda_2 \cdots \lambda_{n-1}$ is a left inverse of $\lambda_n$, it follows that it is also a right inverse of $\lambda_n$, i.e. $\lambda_n \lambda_1 \cdots \lambda_{n-1} = 1$ in $M_\sR$. As $\sR$ is complete, it follows that $\lambda_n \lambda_1 \cdots \lambda_{n-1} \xra{\sR} 1$. The words $\lambda_n$ and $\lambda_1 \cdots \lambda_{n-1}$ are irreducible, being proper subwords (as $n>1$) of $u_i$. Hence, to rewrite $\lambda_n \lambda_1 \cdots \lambda_{n-1}$ to the empty word $1$, there must be some $u$ with $(u, 1) \in \sR$ and $u$ a subword of $\lambda_n \lambda_1 \cdots \lambda_j$ for some $1 \leq j \leq n-1$, and such that this occurrence of $u$ overlaps non-trivially with both $\lambda_n$ and $\lambda_j$. As $\lambda_n$ is minimal, it has no non-trivial right invertible suffix; but every prefix of $u$ is right invertible, so the overlap of $u$ with $\lambda_n$ must be all of $\lambda_n$. Hence $u \equiv \lambda_n u'$ for some $u' \in A^+$. But then the rules $(u_i, 1), (u, 1) \in \sR$ overlap non-trivially, contradicting the lack of non-trivial critical pairs in $\sR$. Hence $n=1$. 

The above shows that every $u_i$ factorises into a single minimal invertible piece, i.e. $\Lambda  =\{ u_1, u_2, \dots, u_k \}$. As $\Lambda$ generates $U(M_\sR)$, and $u_i = 1$ in $M_\sR$ for every $i$, we have $U(M_\sR) = 1$. 
\end{proof}

Using Lemma~\ref{Lem:main_lemma}, we find the main result of this note:

\begin{theorem}\label{Thm:MainThm}
There exists a finitely presented special monoid such that: 
\begin{enumerate}
\item The group of units $U(M)$ is trivial;
\item $M$ has context-free word problem;
\item $M$ is word-hyperbolic; 
\item The Diophantine problem for $M$ is undecidable.
\end{enumerate}
\end{theorem}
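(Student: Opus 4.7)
The plan is to let $M$ be precisely the monoid $M_\sS$ furnished by Otto's theorem (Theorem~\ref{Thm:OttoTheorem}); items (1) and (4) then fall out from the preceding results, and items (2) and (3) will reduce, via a transfer theorem for special monoids, to the (trivial) structure of the group of units.

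First I would record that (4) is immediate: Otto's theorem hands us a finite special rewriting system $\sS$ with no non-trivial critical pairs such that the Diophantine problem of $M_\sS$ is undecidable. Item (1) is then just an application of Lemma~\ref{Lem:main_lemma}: since $\sS$ has no non-trivial critical pairs, $U(M_\sS)$ is trivial. So far no extra work is needed beyond quoting what is already in the note.

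The main content is establishing (2) and (3). Here I would invoke the transfer theorem from \cite{NybergBrodda2022c}, which (together with its hyperbolic analogue in \cite{NybergBrodda2022d}) states that a finitely presented special monoid $M$ has context-free word problem (respectively is word-hyperbolic in the sense of Duncan \& Gilman) if and only if its group of units $U(M)$ has context-free word problem (respectively is word-hyperbolic as a group). Since by (1) the group of units $U(M_\sS)$ is the trivial group, which is vacuously a hyperbolic group and trivially has context-free (indeed finite) word problem, both hypotheses of the transfer theorem are satisfied. Hence $M_\sS$ has context-free word problem, giving (2), and $M_\sS$ is word-hyperbolic, giving (3).

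The only subtlety worth flagging is making sure the notion of ``context-free word problem'' used in (2) matches the Duncan--Gilman formulation of item (3): this is precisely the equivalence established (non-trivially) in \cite{NybergBrodda2022c} for special monoids, and is already invoked in the prose preceding Theorem~\ref{Thm:OttoTheorem}. Once this identification is in hand, the four properties are simultaneously verified for $M := M_\sS$, completing the proof.
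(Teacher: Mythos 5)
Your proposal is correct in outline and, for items (1), (2) and (4), is exactly the paper's argument: take $M = M_\sS$ from Theorem~\ref{Thm:OttoTheorem}, get undecidability of the Diophantine problem for free, and get $U(M)=1$ from Lemma~\ref{Lem:main_lemma}, whence context-freeness of the word problem follows from the main transfer result of \cite{NybergBrodda2022c}. The one place you diverge is item (3). You appeal to a ``hyperbolic analogue'' of the transfer theorem, i.e.\ a statement that a finitely presented special monoid is word-hyperbolic if and only if its group of units is a hyperbolic group. The paper does not rely on any such theorem: it instead deduces (3) from (2) via the much more elementary (and completely general) fact that every monoid with context-free word problem is word-hyperbolic, citing the proof of \cite[Theorem~3]{Cain2012}. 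Your route would work provided the hyperbolic transfer theorem you invoke is actually available in the literature in the form you state it; that is a genuinely stronger input than what is needed here, and since you only need one implication (trivial, hence hyperbolic, group of units $\Rightarrow$ word-hyperbolic monoid), it is cleaner and safer to factor (3) through (2) as the paper does. You might also note the paper's alternative justification of (2): since $\sS$ is a finite complete special rewriting system, context-freeness of the word problem in the Duncan--Gilman sense already follows from \cite[Corollary~3.8]{Book1982}, bypassing the transfer theorem entirely.
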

\begin{proof}
Let $M = M_\sS$ for any system $\sS$ as in Theorem~\ref{Thm:OttoTheorem}. Then $U(M) = 1$ by Lemma~\ref{Lem:main_lemma}, and hence $M$ has context-free word problem by the main result of \cite{NybergBrodda2022c}. Alternatively, as $M$ is defined by a finite complete special rewriting system, the context-freeness of its word problem (in the sense of Duncan \& Gilman) follows from \cite[Corollary~3.8]{Book1982}. Finally, it is not difficult to show that any context-free monoid is word-hyperbolic, see e.g. the proof of \cite[Theorem~3]{Cain2012}.
\end{proof}

Let $M$ be a finitely presented special monoid with group of units $G$. In 2019, Garreta \& Gray asked two questions in the pre-print version of \cite{Garreta2021}\footnote{Available online at \texttt{arXiv:1908.00098v1}.}. The first (Question~6.4) asked: if the Diophantine problem is decidable in $G$, then does it follow that it is decidable in $M$? The second (Question~6.5) asked: is the Diophantine problem decidable for finitely presented word-hyperbolic monoids? As the trivial group clearly has decidable Diophantine problem, our Theorem~\ref{Thm:MainThm} hence gives a negative answer to both these questions, and in fact a negative answer to the more general version of Question~6.5 of whether the Diophantine problem is decidable in finitely presented context-free monoids.

\begin{remark}
In \cite{Otto1995}, Otto constructs a system $\sS$ as in Theorem~\ref{Thm:OttoTheorem} directly from a given undecidable instance of the Post correspondence problem. Using a small undecidable normal system due to Matiyasevich \cite{Matiyasevich1967} and a reduction due to Post \cite{Post1943}, one can find $\sS$ as in Theorem~\ref{Thm:OttoTheorem} with $71$ rules. That is, an explicit example of a $71$-relation special monoid $M$ satisfying Theorem~\ref{Thm:MainThm} can be constructed. It would be interesting to know whether an example $M = \pres{Mon}{A}{w=1}$ with a single relation can be constructed to satisfy the conclusions of Theorem~\ref{Thm:MainThm} (this is essentially Questions~6.2 and 6.3 in \cite{Garreta2021}). We suspect the answer to be negative. 
\end{remark}

\iffalse
\begin{remark}
\comment{Rephrase!}
The system $S$ consists of $5k+1$ rules, where $k-1$ is the number of rules in an undecidable Post correspondence problem on two letters. Post \cite{Post1943} gives a reduction for the decision problem for a $2$-letter $\nu$-rule normal system to a $2$-letter $(\nu+5)$-rule Post correspondence problem. As Matiyasevich \cite{Matiyasevich1967} has given a $2$-letter $8$-rule normal system with undecidable decision problem; hence, we can construct a $15$-rule Post system, giving 
\end{remark}
\fi

{
\bibliography{Specialdiophantinetrivial.bib} 
\bibliographystyle{plain}
}
 \end{document}